\newtheorem{thm}{Theorem}
\newtheorem{exa}{Example}
\theoremstyle{remark}
\theoremstyle{definition}
\newcommand{\R}{\mathbb{R}}
\begin{document}

\title{Products of Unbounded Normal Operators}
\author[M. H. MORTAD]{MOHAMMED HICHEM MORTAD}

\dedicatory{}
\date{}
\keywords{Product of operators. Normal, hyponormal, subnormal
operators. Fuglede-Putnam theorem.}

\subjclass[2000]{Primary 47A05. Secondary 47B15, 47B20.}

\address{
Département de Mathématiques, Université d'Oran, B.P. 1524, El
Menouar, Oran 31000, Algeria.\newline {\bf Mailing address}:
\newline Dr Mohammed Hichem Mortad \newline BP 7085 Seddikia Oran
\newline 31013 \newline Algeria}

\email{mhmortad@gmail.com, mortad@univ-oran.dz.}

\begin{abstract}
The present paper partly constitutes an "unbounded" follow-up of a
paper by I. Kaplansky dealing with bounded products of normal
operators. Results on the normality of unbounded products are also
included.
\end{abstract}

\maketitle

\section{Introduction}
In this paper we are concerned with the normality of the products
$AB$ and $BA$ for two operators, where $B$ is unbounded. The
problems of this sort lie among the most fundamental questions in
the Hilbert space theory and were explored by many authors,
especially in the bounded case (see
\cite{Kapl,kitt-prod-norm,Wieg-matrices-normal,Wieg-infinite-matrices}).
See also the paper \cite{Gheondea} and the references therein.
Nonetheless, only a few shy attempts where made in the unbounded
case. See the very recent papers \cite{Gustafson-Mortad} and
\cite{MGM}.

We also cite the reference \cite{Mortad-Demmath} where the following
result (among others) has been obtained
\begin{thm}\label{mortad-demmath-normality}\hfill
\begin{enumerate}
  \item Assume that $B$ is a unitary operator. Let $A$ be an unbounded
normal operator. If $B$ and $A$ commute (i.e. $BA\subset AB$), then
$BA$ is normal.
  \item Assume that $A$ is a unitary operator. Let $B$ be an unbounded
normal operator. If $A$ and $B$ commute (i.e. $AB\subset BA$), then
$BA$ is normal.
\end{enumerate}
\end{thm}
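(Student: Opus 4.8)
The plan is to verify, for the closed densely defined operator $N = BA$ (closed since $B$ is boundedly invertible and $A$ is closed), the two defining properties of normality: $D(N) = D(N^*)$ and $\|Nx\| = \|N^*x\|$ for every $x \in D(N)$. I focus on statement (1); statement (2) will then follow by an entirely symmetric argument, exchanging the roles of the unitary and the normal factor. First I would record the elementary domain computations. Since $B$ is bounded and everywhere defined, $D(N) = D(BA) = D(A)$ and $(BA)^* = A^*B^*$, so that $D(N^*) = \{x : B^*x \in D(A^*)\}$. Because $A$ is normal, $D(A^*) = D(A)$, and as $B^* = B^{-1}$ this gives $D(N^*) = B\,D(A)$. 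Thus the domain condition $D(N) = D(N^*)$ is exactly the assertion that $B$ maps $D(A)$ onto itself.

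The heart of the matter is to show that $B$ commutes with the positive self-adjoint operator $S := A^*A = AA^*$. I would obtain this as follows. Under the hypothesis $BA \subset AB$ with $A$ normal and $B$ bounded, the unbounded Fuglede--Putnam theorem yields $BA^* \subset A^*B$. Combining the two inclusions and chasing domains carefully, $BA^*A \subset A^*BA \subset A^*AB$, that is, $BS \subset SB$. The main obstacle, and the step requiring the most care, is upgrading this one-sided inclusion into genuine commutation together with the needed domain invariance; here I would use the standard fact that a bounded operator $B$ satisfying $BS \subset SB$ for a self-adjoint $S$ automatically commutes with the resolvent $R = (S - i)^{-1}$ — which one checks by applying $S - i$ to $BRx - RBx$ and invoking the injectivity of $S - i$ — and hence, via the spectral theorem, with every bounded Borel function of $S$ and with $|A| = S^{1/2}$. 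In particular $B$ leaves $D(|A|) = D(A)$ invariant, and since the same applies to $B^{-1}$, we obtain $B\,D(A) = D(A)$, settling the domain condition.

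It remains to check the norm identity on $D(N) = D(N^*)$. For $x$ in this common domain one has $\|Nx\| = \|BAx\| = \|Ax\|$ because $B$ is isometric, while $\|N^*x\| = \|A^*B^*x\| = \|AB^*x\|$ by normality of $A$ (note $B^*x \in D(A)$ since $x \in B\,D(A)$). Writing $\|AB^*x\|^2 = \langle SB^*x, B^*x\rangle = \langle BSB^*x, x\rangle$ and using $BS = SB$, hence $BSB^* = S$, this equals $\langle Sx, x\rangle = \|Ax\|^2$. Therefore $\|Nx\| = \|N^*x\|$, and $N = BA$ is normal. For statement (2), with $A$ unitary and $B$ normal, the identical scheme applies to $T := B^*B = BB^*$: one checks $D(N^*) = D(B)$ and $D(N) = A^{-1}D(B)$, from $AB \subset BA$ the Fuglede--Putnam theorem gives $AB^* \subset B^*A$, whence $AT \subset TA$, then $AT = TA$ with $A$ leaving $D(|B|) = D(B)$ invariant, and the two computations $\|BAx\|^2 = \langle Tx,x\rangle = \|B^*x\|^2 = \|N^*x\|^2$ close the argument.
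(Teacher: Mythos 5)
First, a point of comparison you could not have known: the paper contains no proof of this statement at all. Theorem \ref{mortad-demmath-normality} is imported as background from \cite{Mortad-Demmath}, so there is no in-paper argument to measure yours against. The nearest relative inside the paper is its final theorem ($A$ unitary, $B$ unbounded normal, $BA$ normal $\Longleftrightarrow AB$ normal), which is proved by a different device --- the maximal-symmetry property of self-adjoint operators applied to $B^*B$ and $BB^*$ --- and under a different hypothesis (normality of one product rather than commutation). Judged on its own, your argument is a sound and self-contained route: the reduction to the standard characterization of normality for a closed operator ($D(N)=D(N^*)$ and $\|Nx\|=\|N^*x\|$ on $D(N)$, see \cite{RUD}), the closedness of $BA$ via bounded invertibility of $B$, the identification $D(N^*)=B\,D(A)$, the use of Fuglede--Putnam to get $BA^*\subset A^*B$ and hence $BS\subset SB$ with $S=A^*A$, and the upgrade to commutation with the resolvent, with bounded Borel functions of $S$, and with $|A|=S^{1/2}$ (giving $B\,D(A)=D(A)$) are all correct. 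This spectral-theorem route is heavier machinery than the paper's MS-property trick, but it is the natural way to handle the commutation hypothesis, and it matches in spirit the Fuglede--Putnam technique the paper does use for its other results.

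One step needs repair, though the repair is immediate from what you have already established. In the norm identity you write $\|AB^*x\|^2=\langle SB^*x,B^*x\rangle$, which presupposes $B^*x\in D(S)=D(A^*A)$; but for $x\in D(A)$ you only know $B^*x\in D(A)=D(S^{1/2})$, and $D(S)$ is in general a proper subspace of $D(A)$. The fix is to run the computation through $|A|$ rather than $S$: since $\|Ay\|=\||A|y\|$ for all $y\in D(A)$, and since you have shown $B^*|A|\subset |A|B^*$ with $B^*$ isometric and $B^*D(A)\subseteq D(A)$, you get $\|AB^*x\|=\||A|B^*x\|=\|B^*|A|x\|=\||A|x\|=\|Ax\|$, as desired. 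The same substitution is needed in statement (2), where $\langle TAx,Ax\rangle$ presupposes $Ax\in D(T)=D(B^*B)$; replace it by the computation through $|B|=T^{1/2}$. With these two local corrections your proof is complete.
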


For results involving normality and self-adjointness of unbounded
operator products, see
\cite{MHM1,Mortad-IEOT,Mortad-Demmath,Gustafson-Mortad}. For similar
papers on the sum of two normal operators, see
\cite{mortad-CAOT-sum-normal} and \cite{mortad-sum-newest}.

One purpose of this paper is to try to get an analog for unbounded
operators of the following result
\begin{thm}\label{Kapl-bounded}[Kaplansky, \cite{Kapl}]
Let $A$ and $B$ be two bounded operators on a Hilbert space such
that $AB$ and $A$ are normal. Then $B$ commutes with $AA^*$ iff $BA$
is normal.
\end{thm}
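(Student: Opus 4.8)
The plan is to base everything on the Fuglede--Putnam theorem together with the single associativity relation $NA=AM$, where I write $N=AB$ and $M=BA$; here $N$ is normal by hypothesis, while $M=BA$ is the operator whose normality is in question. Throughout I use that $A$ normal means $A^*A=AA^*$, and that normality of an operator $T$ is the equality $T^*T=TT^*$, so every computation is aimed at comparing $M^*M=A^*B^*BA$ with $MM^*=BAA^*B^*$.

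For the direction ``$BA$ normal $\Rightarrow$ $B$ commutes with $AA^*$'', I would apply the Fuglede--Putnam theorem to $NA=AM$, read with intertwiner $A$ and the two \emph{normal} operators $N=AB$ and $M=BA$. This yields $N^*A=AM^*$, i.e.\ $(AB)^*A=A(BA)^*$, which reads $B^*A^*A=AA^*B^*$. Using $A^*A=AA^*$ gives $B^*AA^*=AA^*B^*$, so $B^*$ --- and hence, by taking adjoints, $B$ --- commutes with $AA^*$. This half is immediate once both products are known to be normal.

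For the converse I cannot invoke Fuglede--Putnam, since the normality of $M=BA$ is exactly the desired conclusion; so I would argue by direct computation, using the hypothesis that $B$ (and therefore $B^*$) commutes with $AA^*=A^*A$. First, $MA^*=BAA^*=AA^*B=A^*AB=A^*N$ and $AM=NA$. Combining these I compute
\begin{align*}
M^*M A^* &= M^*A^*N=(AM)^*N=(NA)^*N=A^*N^*N\\
&=A^*NN^*=A^*(AB)(AB)^*=AA^*BB^*A^*=MM^*A^*,
\end{align*}
where the crucial equality $N^*N=NN^*$ is the normality of $N=AB$, and the final step uses $BAA^*=AA^*B$ to identify $MM^*=AA^*BB^*$. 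Hence $(M^*M-MM^*)A^*=0$, so $M^*M$ and $MM^*$ agree on $\overline{\ran A^*}=(\ker A)^\perp$.

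The main obstacle is that $A$ need not be injective, so the identity above only controls $M^*M-MM^*$ on $\overline{\ran A^*}$, and I must separately handle $\ker A$. Here the normality of $A$ pays off, through $\ker A=\ker A^*$: for $x\in\ker A$ one has $M^*Mx=A^*B^*B(Ax)=0$, while $MM^*x=AA^*BB^*x=BB^*AA^*x=BB^*A(A^*x)=0$, the commutation hypothesis being used to pass $AA^*$ to the right before applying it to $x$. Thus $M^*M-MM^*$ vanishes on both $(\ker A)^\perp$ and $\ker A$, hence on the whole Hilbert space, giving $M^*M=MM^*$, i.e.\ $BA$ is normal. I expect the clean orthogonal splitting into $\ker A$ and $\overline{\ran A^*}$ afforded by the normality of $A$ to be the key point that makes the non-invertible case go through.
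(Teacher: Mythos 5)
Your proof is correct in both directions, but it is worth noting that the paper does not actually prove this theorem --- it quotes it from Kaplansky --- and the intended method is visible in the paper's own unbounded generalizations. Your ``only if'' direction ($BA$ normal $\Rightarrow B$ commutes with $AA^*$) is exactly the argument the paper uses there: apply Fuglede--Putnam to $A(BA)=(AB)A$ to get $A(BA)^*=(AB)^*A$, then invoke $A^*A=AA^*$. Where you genuinely diverge is the ``if'' direction. Kaplansky's proof (which the paper adapts, via Stochel's theorem, to unbounded $B$) uses the polar decomposition $A=UR$ with $U$ unitary and $R=\sqrt{A^*A}$, both available and commuting because $A$ is normal: since $B$ commutes with $R^2=A^*A$ it commutes with $R$, whence $U^*(AB)U=RBU=BRU=BA$, so $BA$ is unitarily equivalent to the normal operator $AB$ and hence normal. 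You instead establish the operator identity $(M^*M-MM^*)A^*=0$ by a direct algebraic computation and then dispose of $\ker A$ separately using $\ker A=\ker A^*$, splitting $H=\ker A\oplus\overline{\ran A^*}$. Your route is more elementary: it needs neither the polar decomposition nor the spectral-theoretic fact that commuting with $R^2$ forces commuting with its square root $R$. What it gives up is the structural conclusion of Kaplansky's argument --- the unitary equivalence of $AB$ and $BA$ --- which is precisely the feature the paper exploits when it pushes the result to unbounded $B$ (replacing unitary equivalence by Stochel's asymmetric Fuglede--Putnam theorem); your direct computation, which freely multiplies and takes adjoints of products, would not survive the domain issues of that setting.
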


In order to do that and to allow a broader audience to read the
present paper, we recall basic definitions and results on unbounded
operators. Some important references are
\cite{CON,GGK,Goldb-1966-unbd,RUD}.

All operators are assumed to be densely defined (i.e. having a dense
domain) together with any operation involving them or their
adjoints. Bounded operators are assumed to be defined on the whole
Hilbert space. If $A$ and $B$ are two unbounded operators with
domains $D(A)$ and $D(B)$ respectively, then $B$ is called an
extension of $A$, and we write $A \subset B$, if $D(A) \subset D(B)$
and if $A$ and $B$ coincide on $D(A)$. We write $A\subseteq B$ if
$A\subset B$ or $A=B$ (meaning that $D(A)=D(B)$ and $Ax=Bx$ for all
$x\in D(A)$).

If $A\subset B$, then $B^* \subset A^*$. An unbounded operator $A$
is said to be closed if its graph is closed; self-adjoint if $A =
A^*$ (hence from known facts self-adjoint operators are
automatically closed); normal if it is closed and $AA^* = A^*A$
(this implies that $D(AA^* ) = D(A^*A)$).

A densely defined operator $A$ is said to be hyponormal if
$D(A)\subset D(A^*)$  and $\|A^*x\|\leq\|Ax\|$ for $x\in D(A)$. We
say that a densely defined operator $S$ in a Hilbert space $H$ is
subnormal if there is another Hilbert space $L\supset H$ and a
normal operator $N$ in $L$ such that $S\subset N$. It is well{known
that each subnormal operator is hyponormal and that each hyponormal
operator is closable

The Fuglede-Putnam theorem (see \cite{FUG} and \cite{PUT}) is
important to prove our results so we recall it here
\begin{thm}
Let $A$ be a bounded operator. Let $N$ and $M$ be two unbounded
normal operators. If $AN\subseteq MA$, then $AN^*\subseteq M^*A$.
\end{thm}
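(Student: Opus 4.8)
The plan is to carry over M.~Rosenblum's entire-function proof of the bounded Fuglede--Putnam theorem, replacing the power-series functional calculus (available only for bounded operators) by the spectral theorem for the unbounded normal operators $N$ and $M$. From $AN\subseteq MA$ an easy induction gives $AN^{k}\subseteq M^{k}A$ for every $k\ge0$; at each step one checks the domain requirement $Ax\in D(M^{k})$ from the already established $M^{j}Ax=AN^{j}x$. Now let $E_N$ denote the spectral measure of $N$. The vectors lying in $\ran E_N(\{|\lambda|\le r\})$ are entire vectors of $N$ (indeed $\|N^{k}x\|\le r^{k}\|x\|$ there, and the same bound holds for $N^{*}$ by normality), and their union over $r>0$ is dense. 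For such an $x$ the estimate $\|M^{k}Ax\|=\|AN^{k}x\|\le\|A\|\,\|N^{k}x\|$ shows that $Ax$ is an entire vector of $M$, whence the series defining $e^{\bar\zeta N}x$ and $e^{\bar\zeta M}Ax$ converge for all $\zeta\in\C$ and satisfy $Ae^{\bar\zeta N}x=e^{\bar\zeta M}Ax$.

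Consider then the operator-valued function $F(\zeta)=e^{\zeta M^{*}}Ae^{-\zeta N^{*}}$, defined initially on the dense set of entire vectors just described. Since $N$ is normal, $N$ and $N^{*}$ strongly commute, so $\zeta M^{*}-\bar\zeta M$ and $\bar\zeta N-\zeta N^{*}$ are skew-adjoint and $e^{\zeta M^{*}-\bar\zeta M}$, $e^{\bar\zeta N-\zeta N^{*}}$ are unitary. Inserting $e^{-\bar\zeta M}e^{\bar\zeta M}$ and $e^{-\bar\zeta N}e^{\bar\zeta N}$ and using the intertwining relation for $e^{\bar\zeta N}$ to collapse the middle factor $e^{\bar\zeta M}Ae^{-\bar\zeta N}$ to $A$, one obtains
\[
F(\zeta)=\big(e^{\zeta M^{*}-\bar\zeta M}\big)\,A\,\big(e^{\bar\zeta N-\zeta N^{*}}\big).
\]
This exhibits $F(\zeta)$ as a product of two unitaries with the bounded operator $A$, so $\|F(\zeta)\|\le\|A\|$ and $F$ extends to a bounded operator for every $\zeta$. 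As $F$ is entire and bounded, Liouville's theorem forces $F\equiv F(0)=A$; differentiating $e^{\zeta M^{*}}Ae^{-\zeta N^{*}}=A$ at $\zeta=0$ gives $M^{*}Ax=AN^{*}x$ on $D(N^{*})=D(N)$, that is $AN^{*}\subseteq M^{*}A$.

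The real difficulty, and the only place where the unbounded setting bites, is that the individual factors $e^{\zeta M^{*}}$ and $e^{-\zeta N^{*}}$ are \emph{unbounded}: the symbol $\lambda\mapsto e^{\zeta\lambda}$ is not bounded on the spectrum, which for a normal operator may be all of $\C$, so one cannot shortcut the argument through resolvents. Hence $F(\zeta)$ must be handled on the dense domain of entire vectors, where every exponential above is a genuinely convergent series of vectors in a fixed spectral subspace $\ran E_N(\{|\lambda|\le r\})$, and only the \emph{unitary} combinations $e^{\zeta M^{*}-\bar\zeta M}$ and $e^{\bar\zeta N-\zeta N^{*}}$ survive in the final factorization. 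Justifying the entireness of $\zeta\mapsto F(\zeta)$, the boundedness obtained from that factorization, and the passage from the differentiated identity back to the inclusion $AN^{*}\subseteq M^{*}A$ is where the technical work concentrates; the algebraic skeleton --- induction, unitary factorization, Liouville --- is exactly Rosenblum's.
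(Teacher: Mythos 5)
The paper never proves this statement: it is recalled verbatim as the classical Fuglede--Putnam theorem, with citations to Fuglede's and Putnam's original papers, and is then used as a tool. So there is no proof in the paper to compare against; what you have produced is a reconstruction of the cited classical result, and your reconstruction is essentially sound. It is Rosenblum's Liouville argument correctly adapted to the unbounded setting: the induction $AN^{k}\subseteq M^{k}A$ with the domain bookkeeping you indicate is right; the spectral subspaces $\ran E_N(\{|\lambda|\le r\})$ do consist of bounded vectors of $N$ and $N^{*}$ and have dense union; the bound $\|M^{k}Ax\|\le\|A\|r^{k}\|x\|$ in fact shows more than you claim, namely $E_M(\{|\lambda|>r\})Ax=0$, so $A$ maps $\ran E_N(\{|\lambda|\le r\})$ into $\ran E_M(\{|\lambda|\le r\})$, and on these reducing subspaces every exponential manipulation, the unitary factorization $F(\zeta)=e^{\zeta M^{*}-\bar\zeta M}Ae^{\bar\zeta N-\zeta N^{*}}$, and Liouville applied to the scalar functions $\zeta\mapsto\langle F(\zeta)x,y\rangle$ are all legitimate. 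One loose end should be stated more carefully than you do: differentiating $F\equiv A$ at $\zeta=0$ gives $M^{*}Ax=AN^{*}x$ only for $x$ in the dense set of bounded vectors, not on all of $D(N^{*})$ as your wording suggests; the inclusion $AN^{*}\subseteq M^{*}A$ then needs the standard closing step --- for $x\in D(N^{*})$ set $x_r=E_N(\{|\lambda|\le r\})x$, note $x_r\to x$ and $N^{*}x_r=E_N(\{|\lambda|\le r\})N^{*}x\to N^{*}x$, hence $M^{*}Ax_r=AN^{*}x_r\to AN^{*}x$ and $Ax_r\to Ax$, and the closedness of $M^{*}$ finishes. This gap is genuinely fillable, so the sketch stands. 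Finally, note that once you have the observation that $A$ intertwines the spectral subspaces of $N$ and $M$, you could dispense with unbounded exponentials altogether: apply the \emph{bounded} Fuglede--Putnam theorem to the restrictions of $N$ and $M$ to $\ran E_N(\{|\lambda|\le r\})$ and $\ran E_M(\{|\lambda|\le r\})$ and pass to the limit as above; that reduction is the shortest modern route, whereas the route implicit in the paper's citations is Fuglede's spectral-theoretic proof of the one-operator case combined with Putnam's two-operator extension (equivalently, Berberian's $2\times2$ matrix trick).
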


We digress a little bit to say that a new version of this famous
theorem has been obtained by the author where all the operators
involved are unbounded. See \cite{Mortad-Fuglede-Putnam-CAOT-2011}.

 It is also convenient to recall
the following theorem which appeared in \cite{STO}, but we state it
in the form we need.
\begin{thm}\label{STOCHEL-ASYMMETRIC}
If $T$ is a closed subnormal (resp. closed hyponormal) operator and
$S$ is a closed hyponormal (resp. closed subnormal) operator
verifying $XT^*\subset SX$ where $X$ is a bounded operator, then
both $S$ and $T^*$ are normal once $\ker X=\ker X^*=\{0\}$.
\end{thm}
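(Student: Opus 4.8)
The plan is to deduce everything from the classical Fuglede--Putnam theorem recalled above, applied to a normal extension, together with the seminormality inequalities and a positivity argument that exploits the injectivity and dense range of $X$.

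First I would record the reductions. The hypothesis $\ker X=\ker X^*=\{0\}$ says exactly that $X$ is injective with dense range, so in its polar decomposition $X=U|X|$ the partial isometry $U$ is in fact unitary and $|X|$ is injective. Next, the two ``resp.'' cases are interchanged by taking Hilbert-space adjoints: since $X$ is bounded and $T$ is closed, applying the adjoint to $XT^*\subset SX$ yields $X^*S^*\subset TX^*$, which is precisely the statement for the operator $X^*$ with the roles of $S$ and $T$ interchanged (and $\ker X^*=\ker X=\{0\}$ is preserved). Hence it suffices to treat the case in which $T$ is subnormal and $S$ is hyponormal.

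The heart of the argument is to upgrade the single relation $XT^*\subset SX$ to the companion relation $XT\subset S^*X$; this is an asymmetric Fuglede--Putnam phenomenon. Here I would use that the subnormal operator $T$ admits a minimal normal extension $N$ on a larger space $K\supseteq H$, transport the intertwining into $K$, apply the Fuglede--Putnam theorem to the \emph{normal} operator $N$, and then compress back to $H$, using the hyponormality inequality $\|S^*y\|\le\|Sy\|$ to control the compression (the operator $S$ itself, being only hyponormal, cannot be dilated and so must be handled through this inequality rather than through Fuglede--Putnam). I expect this to be the main obstacle: every manipulation is an inclusion of unbounded operators, so the real work is the domain bookkeeping --- checking that the relevant vectors lie in $D(N)$, $D(N^*)$, $D(T^*T)$, and so on, and justifying the adjoint identity $(XT^*)^*=TX^*$ and the compression formulas for the unbounded $N$.

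Granting the two relations $XT^*\subset SX$ and $XT\subset S^*X$, the conclusion follows cleanly. Chaining them gives, on a common core, $S^*SX\supseteq XTT^*$ and $SS^*X\supseteq XT^*T$; subtracting yields $PX=-XQ$, where $P:=S^*S-SS^*\ge 0$ and $Q:=T^*T-TT^*\ge 0$ are the nonnegative defect operators furnished by the hyponormality of $S$ and of $T$. Taking the adjoint of $PX=-XQ$ and combining the two identities gives $QX^*X=X^*XQ$, so $Q$ commutes with $X^*X$; therefore, for $\xi$ in the core,
\[
0\le \|P^{1/2}X\xi\|^2=\langle PX\xi,X\xi\rangle=-\langle Q\xi,X^*X\xi\rangle=-\|(X^*X)^{1/2}Q^{1/2}\xi\|^2\le 0 .
\]
Hence $P^{1/2}X\xi=0$ and so $XQ\xi=-PX\xi=0$ for all such $\xi$. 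Since $\ran X$ is dense we conclude $P=0$, i.e.\ $S$ is normal; and since $X$ is injective we conclude $Q=0$, i.e.\ $T$, and therefore $T^*$, is normal, as required.
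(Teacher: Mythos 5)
First, a point of orientation: the paper does not prove this statement at all --- it is quoted verbatim (``in the form we need'') from Stochel's article \cite{STO}, so there is no internal proof to compare against; your attempt has to be judged on its own. Your adjoint-duality reduction between the two ``resp.'' cases is correct: since $X$ is bounded and sits on the left, $(XT^*)^*=T^{**}X^*=TX^*$ by closedness of $T$, so $XT^*\subset SX$ gives $X^*S^*\subseteq(SX)^*\subseteq TX^*$, which is the mirror statement for $X^*$. The genuine gap is the step you yourself flag as ``the main obstacle'': the companion relation $XT\subset S^*X$ is never established, only announced, and the plan you sketch for it cannot work as described. The Fuglede--Putnam theorem (in the version recalled in the paper) requires \emph{both} intertwined operators to be normal; after you dilate the subnormal $T$ to its minimal normal extension $N$ in $K\supseteq H$, the operator on the other side of the intertwining is still only the hyponormal $S$, so there is no Fuglede--Putnam theorem to invoke. ``Using the inequality $\|S^*y\|\le\|Sy\|$ to control the compression'' is not a routine verification --- it is precisely the asymmetric content that makes Stochel's theorem a theorem. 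Note also that the conclusion is known to fail when both operators are merely hyponormal, so whatever fills this hole must exploit the normal extension in an essential, non-formal way; it cannot reduce to domain bookkeeping around a standard FP application.

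Even granting both intertwining relations, your closing positivity argument --- which is indeed the standard way to finish in the \emph{bounded} asymmetric setting --- has unbounded-operator defects. The ``common core'' you work on is $D(T^*T)\cap D(TT^*)$, and for a closed non-normal $T$ this intersection need not be dense, let alone a core; consequently $P=S^*S-SS^*$ and $Q=T^*T-TT^*$ are merely symmetric, possibly non-densely defined operators, so the square roots $P^{1/2}$, $Q^{1/2}$ and the step ``taking the adjoint of $PX=-XQ$'' are not available. Finally, even if one shows $\langle PX\xi,X\xi\rangle=0$ for $\xi$ in that set, normality of an unbounded $S$ means $D(S^*S)=D(SS^*)$ \emph{together with} equality of the two self-adjoint operators; two self-adjoint operators that agree on a dense subset of their domains need not coincide, so $P=0$ ``on a dense set'' does not yield normality. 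This second block of issues is repairable in principle (e.g., via quadratic forms or bounded transforms), but the missing derivation of $XT\subset S^*X$ is a missing idea, not a deferred computation, and with it the proposal does not constitute a proof.
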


\section{Main Results}

We start by giving a counterexample that shows that the same
assumptions, as in Theorem \ref{Kapl-bounded}, would not yield the
same results if $B$ is an unbounded operator, let alone the case
where both operators are unbounded.

What we want is a normal bounded operator $A$ and an unbounded (and
closed) operator $B$ such that $BA$ is normal, $A^*AB\subset BA^*A$
but $AB$ is not normal.
\begin{exa}\label{example counterexample main}
Let
\[Bf(x)=e^{x^2}f(x) \text{ and } Af(x)=e^{-x^2}f(x)\]
on their respective domains
\[D(B)=\{f\in L^2(\R):~e^{x^2}f\in L^2(\R)\} \text{ and } D(A)=L^2(\R).\]
Then $A$ is bounded and self-adjoint (hence normal). $B$ is
self-adjoint (hence closed).

Now $AB$ is not normal for it is not closed as $AB\subset I$. $BA$
is normal as $BA=I$ (on $L^2(\R)$). Hence $AB\subset BA$ which
implies that
\[AAB\subset ABA\Longrightarrow AAB\subset ABA\subset BAA.\]
\end{exa}

Now, we state and prove the generalization of Theorem
\ref{Kapl-bounded}. We have
\begin{thm}
Let $B$ be an unbounded closed operator and $A$ a bounded one such
that $AB$ (resp. $BA$) and $A$ are normal.  Then
\[BA \text{ normal (resp. $AB$)} \Longrightarrow A^*AB\subset BA^*A.\]
\end{thm}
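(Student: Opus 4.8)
The plan is to run the bounded Kaplansky argument of Theorem \ref{Kapl-bounded} while carefully tracking domains. First I would observe that the two cases are really one: in the direct reading the hypotheses are that $AB$ and $A$ are normal and the antecedent adds that $BA$ is normal, while in the ``resp.'' reading $BA$ and $A$ are normal and the antecedent adds that $AB$ is normal. Either way the assumptions amount to \emph{all three} of $A$, $AB$, $BA$ being normal, and the target is the single inclusion $A^*AB\subset BA^*A$. So it suffices to prove: if $A$ is bounded normal, $B$ is closed, and both $AB$ and $BA$ are normal, then $A^*AB\subset BA^*A$. The starting point is the elementary identity $A(BA)=(AB)A$: since $A$ is everywhere defined and bounded, both sides have common domain $D(BA)=\{x:Ax\in D(B)\}$ and both send $x\mapsto ABAx$, so this is a genuine equality of operators, not a mere inclusion.

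Next I would invoke the Fuglede--Putnam theorem in the bounded form recalled above, applied to the bounded operator $A$ and the normal operators $BA$ and $AB$. From the identity just established we have in particular $A(BA)\subseteq(AB)A$, so Fuglede--Putnam delivers
\[A(BA)^*\subseteq(AB)^*A.\]
Because $A$ is bounded and sits as the left factor of $AB$, the product rule for adjoints gives the exact identity $(AB)^*=B^*A^*$, so the inclusion becomes
\[A(BA)^*\subseteq B^*A^*A.\]

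Finally I would take adjoints in this last inclusion, which reverses it to $(B^*A^*A)^*\subseteq (A(BA)^*)^*$. On the left, $A$ is again the bounded left factor, so the exact adjoint rule gives $(A(BA)^*)^*=(BA)^{**}A^*=BAA^*$, using that $BA$ is normal, hence closed, so $(BA)^{**}=BA$. On the right, the general product inclusion $Q^*P^*\subseteq(PQ)^*$ with $P=B^*$ and the bounded self-adjoint $Q=A^*A$ gives $A^*AB=(A^*A)B^{**}\subseteq(B^*A^*A)^*$, where $B^{**}=B$ since $B$ is closed. Chaining the two yields $A^*AB\subseteq BAA^*$, and normality of $A$ (so that $AA^*=A^*A$) turns this into $A^*AB\subset BA^*A$, as required.

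I expect the main obstacle to be the domain bookkeeping. Unlike the bounded situation, where Kaplansky's argument is a chain of equalities, here Fuglede--Putnam and the product--adjoint formulas produce only one-sided inclusions. The delicate point is to keep the bounded factor $A$ in the ``good'' (left) position every time an \emph{exact} adjoint identity $(A\,\cdot)^*=(\cdot)^*A^*$ is used, and to fall back on the weaker $Q^*P^*\subseteq(PQ)^*$ precisely where the exact formula fails, so that all the inclusions point in the single direction needed to land on $A^*AB\subset BA^*A$; verifying that each operator appearing is densely defined (so its adjoint is legitimate) and that closedness of $BA$ and of $B$ justifies the double-adjoint simplifications is the part that demands care.
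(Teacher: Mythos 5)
Your proposal is correct and follows essentially the same route as the paper: the identity $A(BA)=(AB)A$, the bounded-operator Fuglede--Putnam theorem applied to the normal operators $BA$ and $AB$ to get $A(BA)^*\subseteq (AB)^*A$, and then adjoint manipulations (using $(AB)^*=B^*A^*$ and normality of $A$) to land on $A^*AB\subset BA^*A$. The only difference is that you spell out the domain bookkeeping and the distinction between exact and one-sided adjoint product rules, which the paper compresses into a single line.
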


\begin{proof}
Since $AB$ and $BA$ are normal, the
  equation
  \[A(BA)=(AB)A\]
  implies that
  \[A(BA)^*=(AB)^*A\]
  by the Fuglede-Putnam theorem. Hence
  \[AA^*B^*\subset B^*A^*A \text{ or } A^*AB\subset BA^*A.\]
\end{proof}

We already observed in Example \ref{example counterexample main}
that the converse in the previous theorem does not hold. An extra
hypothesis combined with a result by Stochel \cite{STO} yield the
following

\begin{thm}
If $B$ is an unbounded closed operator and and if $A$ is a bounded
one such that $AB$ and $A$ are normal, and if further $BA$ is
hyponormal (resp. subnormal), then

\[BA \text{ normal} \Longleftarrow A^*AB\subset BA^*A.\]
\end{thm}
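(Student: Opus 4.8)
The plan is to reduce the commutation hypothesis to an asymmetric intertwining relation of exactly the shape demanded by Stochel's Theorem~\ref{STOCHEL-ASYMMETRIC}, and then to read off the normality of $BA$ from its conclusion. First I would rewrite $A^*AB \subset BA^*A$ as a relation between the two products $AB$ and $BA$. Since $A$ is bounded, $A^*$ is everywhere defined, so on $D(B)$ one has $A^*(AB) = (A^*A)B = A^*AB$. On the other side, because $A$ is normal we have $AA^* = A^*A$, whence $(BA)A^* = BAA^* = BA^*A$, with domain $\{x : A^*Ax \in D(B)\}$. Consequently the hypothesis is nothing but
\[A^*(AB) \subset (BA)A^*.\]

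With this identity in hand I would invoke Theorem~\ref{STOCHEL-ASYMMETRIC} with the bounded operator $X = A^*$, with $T = (AB)^*$ (so that $T^* = AB$), and with $S = BA$. The displayed inclusion is then precisely $XT^* \subset SX$. Since $AB$ is normal, its adjoint $T = (AB)^*$ is again normal, hence closed and both subnormal and hyponormal; and $S = BA$ is closed (as $B$ is closed and $A$ is bounded) and is hyponormal (resp.\ subnormal) by assumption. Thus the two regimes of Theorem~\ref{STOCHEL-ASYMMETRIC} correspond exactly to the two cases of the present statement: taking $S=BA$ hyponormal pairs with $T$ subnormal, and $S=BA$ subnormal pairs with $T$ hyponormal. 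In either case the conclusion of Stochel's theorem yields that $S = BA$ (and $T^* = AB$) is normal, which is what we want.

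The point requiring genuine care, and the one I expect to be the main obstacle, is the kernel condition $\ker X = \ker X^* = \{0\}$ that Theorem~\ref{STOCHEL-ASYMMETRIC} imposes. Here $X = A^*$, so this amounts to $\ker A^* = \ker A = \{0\}$; as $A$ is normal one has $\ker A = \ker A^*$ automatically, so everything reduces to the injectivity of $A$. This cannot be dispensed with, since Stochel's mechanism collapses as soon as $X$ fails to have trivial kernel and dense range, and so it must be assumed (or otherwise secured) for the argument to proceed. By contrast, the verification of the two domain identities in the first step is entirely routine, so the real content of the proof is the recognition that $A^*AB \subset BA^*A$ is exactly the Stochel intertwining $A^*(AB)\subset(BA)A^*$, and the handling of the injectivity of $A$.
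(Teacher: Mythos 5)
Your reduction of the hypothesis to the intertwining $A^*(AB)\subset (BA)A^*$ is correct (the domain bookkeeping, using $AA^*=A^*A$, checks out), and your casting of Stochel's theorem with $T=(AB)^*$ and $S=BA$ is exactly the structure of the paper's own conclusion. The genuine gap is precisely where you locate the difficulty and then stop: with $X=A^*$, the kernel condition $\ker X=\ker X^*=\{0\}$ of Theorem \ref{STOCHEL-ASYMMETRIC} becomes the injectivity of $A$, which is \emph{not} among the hypotheses of the theorem. Declaring that injectivity ``must be assumed'' does not prove the stated result; it proves a strictly weaker one. Moreover the claim that the condition ``cannot be dispensed with'' is wrong as a statement about the theorem itself: the theorem holds without injectivity, because one can choose a better intertwiner than $A^*$.

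The missing idea, which goes back to Kaplansky and is what the paper does, is to intertwine with the unitary phase of $A$ rather than with $A^*$. Since $A$ is bounded and \emph{normal}, its polar decomposition can be taken as $A=UR=RU$ with $R=\sqrt{A^*A}$ positive and $U$ \emph{unitary}, even when $A$ is not injective: $\ker A=\ker A^*$ forces the partial isometry to have equal initial and final spaces, so it extends to a unitary (e.g.\ by the identity on $\ker A$), still commuting with $R$. The hypothesis $A^*AB\subset BA^*A$ reads $R^2B\subset BR^2$, and since $B$ is closed and $\sqrt{t}$ is a uniform limit of polynomials on the spectrum of $R^2$, this upgrades to $RB\subset BR$. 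Hence
\[U^*ABU=U^*URBU=RBU\subset BRU=BA,\]
so that $U^*(AB)\subset (BA)U^*$, and by closedness of $AB$ this is $U^*((AB)^*)^*\subset (BA)U^*$: the very same Stochel intertwining you wrote down, but now with $X=U^*$ unitary, for which the kernel conditions are automatic. From there your own argument ($T=(AB)^*$ normal, hence closed, subnormal and hyponormal; $S=BA$ closed and hyponormal, resp.\ subnormal) finishes the proof with no extra assumption on $A$.
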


\begin{proof}The idea of proof is similar in core to Kaplansky's (\cite{Kapl}). Let $A=UR$ be the polar decomposition of
  $A$, where $U$ is unitary and $R$ is positive (remember that they also
  commute and that $R=\sqrt{A^*A}$), then one may write
  \[U^*ABU=U^*URBU=RBU\subset BR U=BA\]
  or
  \[U^*AB=U^*\overline{AB}=U^*((AB)^*)^*\subset BA U^*\]
  (by the closedness of $AB$). Since $(AB)^*$ is normal, it is
  closed and subnormal. Since $B$ is closed and $A$ is bounded, $BA$
  is closed. Since it is hyponormal, Theorem \ref{STOCHEL-ASYMMETRIC} applies and
  yields the normality of $BA$ as $U$ is invertible.

  The proof is very much alike in the case of subnormality.
\end{proof}

Now, if we assume that $A$ is unitary, then we have the following
interesting result (cf Theorem \ref{mortad-demmath-normality}) that
bypasses the commutativity of operators. Besides, this constitutes
generalization of Theorem \ref{Kapl-bounded} with the assumption $A$
unitary.

\begin{thm}
If $A$ is unitary and $B$ is an unbounded normal operator, then
\[BA \text{ is normal $\Longleftrightarrow AB$ is normal.}\]
\end{thm}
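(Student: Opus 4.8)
The plan is to exploit the fact that, since $A$ is unitary, the operators $AB$ and $BA$ are \emph{unitarily equivalent} to one another, and then to invoke the elementary principle that unitary equivalence preserves normality. Concretely, I would first establish the two operator identities
\[AB = A(BA)A^* \quad \text{and} \quad BA = A^*(AB)A,\]
as genuine equalities of unbounded operators (not merely extensions between them). These should follow purely from $A$ being everywhere defined with $AA^* = A^*A = I$, without any recourse to the Fuglede--Putnam machinery used for the earlier theorems.

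The first real step is the verification that these identities hold \emph{with equal domains}, which is where the unboundedness of $B$ demands care. Since $A$ is bounded and everywhere defined, one has $D(AB) = D(B)$, while $D(BA) = A^{-1}(D(B)) = A^* D(B)$. Tracking the composition $A(BA)A^*$, its domain consists of those $x$ with $A^* x \in D(BA)$, i.e. $A A^* x = x \in D(B)$; thus $D(A(BA)A^*) = D(B) = D(AB)$, and for such $x$ one computes $A(BA)A^* x = A B (A A^* x) = ABx$. The symmetric computation gives $D(A^*(AB)A) = A^* D(B) = D(BA)$ with $A^*(AB)A = BA$. I would remark that this bookkeeping uses only that $A$ is unitary and that $B$ is closed (so that both products are closed and densely defined); the full normality of $B$ is not actually consumed here.

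It then remains to record that conjugation by a unitary preserves normality in the unbounded setting. For a closed normal $N$ and a unitary $U$, I would verify directly that $(UNU^*)^* = U N^* U^*$ by a short adjoint computation, using that $U$ is bounded; from this,
\[(UNU^*)(UNU^*)^* = U N N^* U^* \quad \text{and} \quad (UNU^*)^*(UNU^*) = U N^* N U^*,\]
and since $NN^* = N^*N$ on the common domain $D(NN^*) = D(N^*N)$, the two sides coincide on the common domain $U D(NN^*) = U D(N^*N)$, while closedness of $UNU^*$ is immediate from that of $N$. Applying this with $N = BA$ and $U = A$ yields $BA$ normal $\Rightarrow AB$ normal, and applying it with $N = AB$ and $U = A^*$ yields the converse (noting $(A^*)^* = A$). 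The only genuine obstacle is the domain accounting in the middle paragraph; once the identities are pinned down as exact operator equalities, the normality transfers automatically and the equivalence follows in both directions.
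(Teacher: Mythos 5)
Your proposal is correct, but it takes a genuinely different route from the paper's proof. The paper works with the products directly: it uses the maximal-symmetry property of self-adjoint operators (if $T$ is self-adjoint, $S$ is symmetric and $T\subset S$, then $T=S$) together with the fact that $T^*T$ and $TT^*$ are self-adjoint whenever $T$ is closed; it computes $(AB)^*AB=B^*A^*AB=B^*B$, establishes $BB^*=BA(BA)^*$ via maximal symmetry, then uses the normality of $BA$ (resp.\ of $AB$) to chase inclusions such as $A^*B^*B\subset BB^*A^*$ until maximal symmetry forces $(AB)^*AB=AB(AB)^*$ (resp.\ the analogous equalities for $BA$). You instead pin down the exact operator equalities $AB=A(BA)A^*$ and $BA=A^*(AB)A$ --- your domain bookkeeping is right, since $D\bigl(A(BA)A^*\bigr)=\{x:AA^*x\in D(B)\}=D(B)$ --- and then invoke invariance of normality under unitary conjugation. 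One caution on that last step: proving $(UNU^*)^*=UN^*U^*$ requires the equality $(NU^*)^*=UN^*$, which is true because $U^*$ is unitary, but it does \emph{not} follow from the general rule $(ST)^*=T^*S^*$ for bounded $S$, which applies only when the bounded factor sits on the left; for a bounded factor on the right one gets only the inclusion $(NU^*)^*\supseteq UN^*$ in general, so this point deserves an explicit argument. Granting that, your approach buys several things: it avoids both the Fuglede--Putnam theorem and the maximal-symmetry machinery, it handles both directions by one symmetric principle, and, as you note, it never consumes the normality (or even closedness) of $B$ --- only dense definedness, so that adjoints exist --- hence it proves the stronger statement that for \emph{any} densely defined $B$ and unitary $A$, the operator $AB$ is normal iff $BA$ is. The paper's computation, on the other hand, yields the explicit identification $(AB)^*AB=B^*B=BA(BA)^*$ of the products involved, which is of some independent interest and matches the toolkit used elsewhere in the paper.
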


\begin{proof}
First, recall that self-adjoint operators are maximally symmetric,
that is, if $T$ is self-adjoint and $S$ is symmetric, then $T\subset
S\Rightarrow T=S$ (we shall call this the MS-property). Second, if
$T$ is closed, then $T^*T$ and $TT^*$ are both self-adjoint (see
\cite{RUD} for both results).

Now, assume that $BA$ is normal. To show that $AB$ is normal observe
that it is first closed (which is essential) thanks to the
invertibility of $A$ and the closedness of $B$. We then have
\[(AB)^*AB=B^*A^*AB=B^*B\]
and
\[AB(AB)^*=ABB^*A^*\]
and we must show the equality of the two quantities. We have
\[A^*B^*BA\subset(BA)^*BA \text{ and } BB^*\subset BA(BA)^*.\]

By the closedness of $B$ and that of $BA$, and the MS-property
\[BB^*=BA(BA)^*.\]
 By the
normality of $BA$, we obtain
\[A^*B^*BA\subset BB^* \text{ or } A^*B^*B\subset BB^*A^*.\]
Hence
\[AB(AB)^*=ABB^*A^*\supset AA^*B^*B=B^*B=(AB)^*AB.\]
Therefore, and by the MS-property again, we must have
\[AB(AB)^*=(AB)^*AB.\]

To prove the converse, we may argue similarly with some minor
changes. Suppose that $AB$ is normal and let us show that $BA$ is
normal too.

Firstly, note that $BA$ is closed, but this time, since $B$ is
closed and $A$ is bounded.

Secondly, and by the normality of $AB$, we have
\[(AB)^*AB=B^*B=AB(AB)^*=ABB^*A^*\]
and hence
\[ABB^*=B^*BA.\]

Accordingly, we have
\[(BA)^*BA\supset A^*B^*BA=A^*ABB^*=BB^*\]
and
\[BA(BA)^*\supset BB^*.\]
By the MS-property, we must have
\[(BA)^*BA=BB^* \text{ and } BA(BA)^*=BB^*,\]
proving the normality of $BA$. The proof is complete.

\end{proof}

\bibliographystyle{amsplain}

\begin{thebibliography}{1}

\bibitem{CON}
J.~B.~Conway, \textit{ A Course in Functional Analysis}, Springer,
1990 (2nd edition).

\bibitem{FUG}
B. Fuglede, \textit{ A Commutativity Theorem for Normal Operators,}
Proc. Nati. Acad. Sci., {\bf 36} (1950) 35-40.

\bibitem{Gheondea}
A. Gheondea, When are the products of normal operators normal?
\textit{Bull. Math. Soc. Sci. Math. Roumanie (N.S.)} {\bf 52(100)/2}
(2009) 129-150.

\bibitem{GGK}
I. Gohberg, S. Goldberg, M. A. Kaashoek, Basic Classes of Linear
Operators, Birkhäuser Verlag, Basel, 2003.

\bibitem{Goldb-1966-unbd}
S. Goldberg, Unbounded Linear Operators, McGraw-Hill, 1966.

\bibitem{Gustafson-Mortad}
K. Gustafson, M. H. Mortad, On Products of Unbounded Operators,
(submitted).

\bibitem{Kapl}
I. Kaplansky, Products of normal operators, \textit{Duke Math. J.},
{\bf 20/2} (1953) 257-260.

\bibitem{kitt-prod-norm}
F. Kittaneh,  On the normality of operator products, \textit{Linear
and Multilinear Algebra}, {\bf 30/1-2} (1991) 1-4.

\bibitem{MHM1}
M. H. Mortad, An application of the Putnam-Fuglede theorem to normal
products of self-adjoint operators, \textit{Proc. Amer. Math. Soc.},
{\bf 131/10}, (2003) 3135-3141.

\bibitem{Mortad-IEOT}
M. H. Mortad, On some product of two unbounded self-adjoint
operators, \textit{Integral Equations Operator Theory}, {\bf 64/3},
(2009) 399-408.

\bibitem{Mortad-Demmath}
M. H. Mortad, On the closedness, the self-adjointness and the
normality of the product of two unbounded operators,
\textit{Demonstratio Math.}, \textbf{45/1} (2012), 161-167.

\bibitem{mortad-CAOT-sum-normal}
M. H. Mortad, On the normality of the sum of two normal operators,
\textit{Complex Anal. Oper. Theory}, , {\bf 6/1} (2012), 105-112.
DOI: 10.1007/s11785-010-0072-7.

\bibitem{Mortad-Fuglede-Putnam-CAOT-2011}
M. H. Mortad, \textit{An all-unbounded-operator Vversion of the
Fuglede-Putnam theorem},  Complex Anal. Oper. Theory,  (to appear).
DOI: 10.1007/s11785-011-0133-6.

\bibitem{mortad-sum-newest}
M. H. Mortad, The sum of two unbounded linear operators: closedness,
self-adjointness, normality and much more, \textit{(submitted)}.
arXiv:1203.2545

\bibitem{Mortad-Madani}
M. H. Mortad, Kh. Madani, More on the normality of the unbounded
product of two normal operators, \textit{(submitted)}. arXiv:
1202.6142.

\bibitem{PUT}
C. R. Putnam, \textit{ On normal operators in Hilbert space,} Amer.
J. Math., {\bf 73}\textnormal{ (1951) 357-362}.

\bibitem{RUD}
W.~Rudin, {\textit{Functional analysis},} McGraw-Hill, 1991 (2nd
edition).

\bibitem{STO}
J. Stochel, An asymmetric Putnam-Fuglede theorem for unbounded
operators, \textit{Proc. Amer. Math. Soc.}, {\bf 129/8} (2001)
2261-2271.

\bibitem{Wieg-matrices-normal}
N.A. Wiegmann, Normal products of matrices. \textit{Duke Math J.},
\textbf{15}, (1948) 633-638.

\bibitem{Wieg-infinite-matrices}
N.A. Wiegmann, A note on infinite normal matrices. \textit{Duke
Math. J.}, \textbf{16} (1949) 535-538.

\end{thebibliography}

\end{document}